\theoremstyle{definition}
\newtheorem*{theoA}{Theorem A}
\newtheorem*{theoB}{Theorem B}
\newtheorem*{theoC}{Theorem C}
\newtheorem{theorem}{Theorem}[section]
\newtheorem{lem}{Lemma}[section]
\newtheorem{defi}{Definition}[section]
\newcommand{\be}{\begin{equation}}
\newcommand{\ee}{\end{equation}}
\newcommand{\beas}{\begin{eqnarray*}}
\newcommand{\eeas}{\end{eqnarray*}}
\newcommand{\bea}{\begin{eqnarray}}
\newcommand{\eea}{\end{eqnarray}}
\numberwithin{equation}{section}
\begin{document}
\title[Uniqueness of Meromorphic Functions]{Further Investigations on Weighted Value Sharing and Uniqueness of Meromorphic Functions}
\date{}
\author[S. Saha, A. K. Pal and S. Roy]{Sudip Saha$^{1, 2}$, Amit Kumar Pal$^{3, 4}$ and Soumon Roy$^{5}$}
\date{}
\address{$^{1}$Department of Mathematics, Ramakrishna Mission Vivekananda Centenary College, Rahara,
West Bengal 700 118, India.}
\email{sudipsaha814@gmail.com}
\address{$^{2}$Assistant Professor, Department of Mathematics, Brainware University, 398, Ramkrishnapur Road, Jagadighata Market, Barasat, Kolkata 700125, W.B., India}
\email{ss.math@brainwareuniversity.ac.in}
\address{$^{3}$Department of Mathematics, University of Kalyani, Kalyani, West Bengal 741 235, India.}
\address{$^{4}$Assistant Professor, Department of Mathematics, Rajiv Gandhi National Institute of Youth Development, Bangalore to Chennai National Highway, Beemanthangal,  Sri Ram Nagar, Sriperumbudur, Nemili, Tamil Nadu 602105.}
\email{mail4amitpal@gmail.com}
\address{$^{5}$Department of Mathematics, Ramakrishna Mission Vivekananda Centenary College, Rahara,
West Bengal 700 118, India.}
\email{rsoumon@gmail.com}
\maketitle
\let\thefootnote\relax
\footnotetext{2020 Mathematics Subject Classification: 30D35, 30D30, 30D20}
\footnotetext{Key words and phrases: Meromorphic function, Value Distribution, Nevanlinna theory.}
\maketitle
\begin{abstract}
In this short manuscript, we will put some light on the different outcomes when two non-constant meromorphic functions share a value with prescribed weight two.
\end{abstract}
\vspace{1cm}
\section{Introduction, Definition and Results}
In this paper, by meromorphic function we will always mean meromorphic functions in the complex plane. It will be convenient to let $E$ to denote any set of positive real numbers of finite linear measure, not necessarily same at each occurrence. For any non-constant meromorphic function $h(z)$ we denote by $S(r,h)$ any quantity satisfying 
$$S(r,h)=o(T(r,h)),$$
as $r \to \infty, r \not \in E.$\\
We shall adopt the standard notation of the Nevanlinna theory of meromorphic function as described in (\cite{wkh}). However we shall discuss some definitions and notations which will be needed in the sequel.\\
Let, $f$ and $g$ be two non-constant meromorphic functions and let $a$ be a finite complex number. We say that $f$ and $g$ share the value $a$ CM (counting multiplicities), provided that $f-a$ and $g-a$ have the same zeros with same multiplicities. Similarly we say that $f$ and $g$ share the value $a$ IM (ignoring multiplicities), provided that $f-a$ and $g-a$ have the same zeros, but the multiplicities are not taken into account. In addition to this we say that $f$ and $g$ share $\infty$ CM(IM), if $\frac{1}{f}$ and $\frac{1}{g}$ share $0$ CM(IM). 
\begin{defi}
We denote by $N(r,a;f|=1)$ the counting function of simple $a$-points of $f$.
\end{defi}
\begin{defi}
If $k$ be a positive integer, we denote by $\overline{N}(r,a;f| \geq k)$ the counting function of those $a$-points of $f$, whose multiplicity is greater than or equal to $k$.
\end{defi}
\begin{defi}
We denote by $N_{2}(r,a;f)$ the sum $\overline{N}(r,a;f)+ \overline{N}(r,a;f| \geq 2)$.
\end{defi}
\begin{defi}
Let $f$ and $g$ be two non-constant meromorphic functions such that $f$ and $g$ share $(a,0)(a \in \mathbb{C}\cup \{\infty\})$. Let $z_0$ be an $a$-point of $f$ with multiplicity $p$, an $a$-point of $g$ with multiplicity $q$. We denote by $\overline{N}_{L}(r,a;f)$ the reduced counting function of those $a$-points of $f$ and $g$, where $p>q$. \par
we denote by $N_{E}^{1)}(r,a;f)$ the counting function of those $a$-points of $f, g$ where $p=q=1$ and by $\overline{N}_{E}^{(2}(r,a;f)$ the reduced counting function of those $a$-points of $f$ and $g$ where $p=q\geq 2.$ In the same way we can define $\overline{N}_{L}(r,a;g), N_{E}^{1)}(r,a;g), \overline{N}_{E}^{(2}(r,a;g).$
\end{defi}
\begin{defi}
Let, $f, g$ share a value $a$ IM, we denote by $\overline{N}_{*}(r,a;f,g)$ the counting function of those $a$-points of $f$ whose multiplicities are not equal to the multiplicities of the corresponding $a$-points of $g$, where each $a$-point is counted only once.\\
Clearly $\overline{N}_{*}(r,a;f,g)=\overline{N}_{*}(r,a;g,f)$ and $\overline{N}_{*}(r,a;f,g)=\overline{N}_{L}(r,a;f)+\overline{N}_{L}(r,a;g)$.
\end{defi}
\begin{defi}
Let $k$ be a non-negative integer or infinity. For $a \in \mathbb{C} \cup \{\infty \}$ we denote by $E_{k}(a;f)$ the set of all $a$-points of $f$ where an  $a$-point of multiplicity $m$ is counting $m$ times if $m \leq k$ and $k+1$ times if $m > k$.\par
If $E_{k}(a;f)=E_{k}(a;g)$, then we say that $f$ and $g$ share the value $a$ with weight $k$. We sometimes also write $f, g$ share $(a,k)$ to mean that $f, g$ share the value $a$ with weight $k$. 
\end{defi}
The following is a very well known and important result in the uniqueness theory of meromorphic functions and had been proved by a number of authors in (\cite{mf, xhh, mr, yahu, yi}). This has a wide range of application in the uniqueness theory of meromorphic functions.
\begin{theoA}
If $f$ and $g$ share $1$ CM, one of the following three cases hold:
\begin{itemize}
\item[(i)] $T(r,f) \leq N_{2}(r,0;f)+ N_{2}(r,0;g)+ N_{2}(r,\infty;f)+ N_{2}(r,\infty;g)+S(r,f)+S(r,g),$ same inequality hold for $T(r,g)$.
\item[(ii)] $f \equiv g$,
\item[(iii)] $f \cdot g \equiv 1$.
\end{itemize}
\end{theoA}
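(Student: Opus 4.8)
The natural route is the classical auxiliary‑function method. I would set
\[
H=\left(\frac{f''}{f'}-\frac{2f'}{f-1}\right)-\left(\frac{g''}{g'}-\frac{2g'}{g-1}\right),
\]
which is built so as to vanish at every common simple $1$-point: near such a point $z_0$ each of the two brackets has Laurent expansion $-\frac{2}{z-z_0}+O(z-z_0)$, so $H=O(z-z_0)$ there. The argument then splits according to whether $H\equiv 0$.

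\emph{Case $H\equiv 0$.} Then $\frac{f''}{f'}-\frac{2f'}{f-1}=\frac{g''}{g'}-\frac{2g'}{g-1}$; integrating once gives $\frac{f'}{(f-1)^2}=c\,\frac{g'}{(g-1)^2}$ with $c\neq 0$, and integrating again yields
\[
\frac{1}{f-1}=\frac{A}{g-1}+B,\qquad A\neq 0 .
\]
Thus $f$ is the image of $g$ under a M\"obius transformation fixing the value $1$, and in particular $T(r,f)=T(r,g)+O(1)$. I would then run through the possibilities for $(A,B)$: the case in which the transformation is the identity gives $f\equiv g$, i.e.\ (ii); the case in which it is $w\mapsto 1/w$ gives $fg\equiv 1$, i.e.\ (iii); in every remaining case the relation expresses $\overline{N}(r,0;f)$ and $\overline{N}(r,\infty;f)$ as counting functions of $g$ at suitable points of $\mathbb{C}\cup\{\infty\}$ other than $0,1,\infty$, and the second main theorem applied to $g$ at three such distinct points then forces (i) (with the two‑value version used in the degenerate sub‑cases where $f$ or $g$ omits a value, and $T(r,f)=T(r,g)+O(1)$ transferring the bound).

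\emph{Case $H\not\equiv 0$.} Since $f,g$ share $1$ CM, every simple $1$-point of $f$ is a common simple $1$-point, hence a zero of $H$, so
\[
N(r,1;f|=1)\le N(r,0;H)\le m(r,H)+N(r,H)+O(1),
\]
with $m(r,H)=S(r,f)+S(r,g)$ by the lemma on logarithmic derivatives. A short local analysis shows that $H$ has only simple poles and, in particular, is regular at the common $1$-points (the principal parts of the two brackets cancel); its poles can therefore lie only at the multiple zeros of $f$ and $g$, the poles of $f$ and $g$, and the zeros of $f'$ and $g'$ situated off the $0$- and $1$-points. This bounds $N(r,H)$ by $\overline{N}(r,0;f|\ge 2)+\overline{N}(r,0;g|\ge 2)+\overline{N}(r,\infty;f)+\overline{N}(r,\infty;g)$ together with ramification terms arising from the extra zeros of $f'$ and $g'$. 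Feeding this into the second main theorem for $f$ in its ramified form $T(r,f)\le\overline{N}(r,0;f)+\overline{N}(r,1;f)+\overline{N}(r,\infty;f)-N_0(r,0;f')+S(r,f)$ — so that $-N_0(r,0;f')$ absorbs the matching contribution from $N(r,H)$ — and exploiting the CM‑sharing of $1$ to trade $\overline{N}(r,1;f)$-data for $\overline{N}(r,1;g)$-data, I would collect what remains into $N_2(r,0;f)+N_2(r,0;g)+N_2(r,\infty;f)+N_2(r,\infty;g)+S(r,f)+S(r,g)$, which is (i); running the same computation with $-H$ (equivalently, interchanging $f$ and $g$) gives the stated inequality for $T(r,g)$.

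The step I expect to be the real work is this last one: the precise accounting of the pole divisor of $H$ and its interaction with the ramification term of the second main theorem, keeping careful track of multiplicities and of which zeros of $f'$ and $g'$ are genuinely new rather than lying over $0$- or $1$-points — landing exactly on the truncated functions $N_2$ on the right is what makes this delicate. The $H\equiv 0$ branch, though longer, is essentially mechanical once the M\"obius relation is in hand, the only mild care needed being the enumeration of the degenerate sub‑cases.
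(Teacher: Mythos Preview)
The paper does not supply its own proof of Theorem~A: it is quoted as a known result, with references to \cite{mf, xhh, mr, yahu, yi}, and serves only as motivation for Theorems~B, C and the main Theorem~\ref{th1}. There is therefore no in-paper argument to compare your proposal against.

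That said, your outline is the standard one and matches the machinery the paper itself deploys for its main theorem. The auxiliary function $H$ you introduce is exactly the $H$ defined at the start of Section~3; your observation that simple common $1$-points are zeros of $H$ is the content of Lemma~\ref{lem1}, and your pole accounting for $H$ is a CM-sharing specialisation of Lemma~\ref{lem2}. The paper then feeds these into the second main theorem in precisely the way you describe (see equations (\ref{pfeq2})--(\ref{pfeq4})), and the $H\equiv 0$ branch is handled via the M\"obius relation (\ref{pfeq5}) followed by a case enumeration, just as you propose. So while there is nothing to compare for Theorem~A itself, your approach is correct and is essentially the template the paper uses throughout.
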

Being motivated with this result Prof. I. Lahiri in (\cite{il}), tried to relax the nature of sharing to some lower weight and obtained the following two results:
\begin{theoB}\label{rth1.1}
Let $f, g$ share $(1,2)$. Then one of the following cases hold:
\begin{itemize}
\item[(i)]  $T(r,f) \leq N_{2}(r,0;f)+ N_{2}(r,0;g)+ N_{2}(r,\infty;f)+ N_{2}(r,\infty;g)+S(r,f)+S(r,g),$ same inequality hold for $T(r,g)$.
\item[(ii)] $f \equiv g$.
\item[(iii)] $f \cdot g = 1$.
\end{itemize}
\end{theoB}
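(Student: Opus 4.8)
The plan is to adapt the proof of Theorem A by the classical auxiliary--function method. Put
\[
H=\left(\frac{f''}{f'}-\frac{2f'}{f-1}\right)-\left(\frac{g''}{g'}-\frac{2g'}{g-1}\right),
\]
and split the argument into the two cases $H\not\equiv 0$ and $H\equiv 0$.

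Assume first that $H\not\equiv 0$. Since $H$ is built from logarithmic derivatives, $m(r,H)=S(r,f)+S(r,g)$, and hence $N(r,0;H)\le N(r,\infty;H)+S(r,f)+S(r,g)$. A short Laurent expansion shows that at a point which is a simple $1$-point of both $f$ and $g$ --- and by $E_{2}(1;f)=E_{2}(1;g)$ these are exactly the $1$-points of multiplicity one --- the principal parts $-2/(z-z_{0})$ of the two brackets cancel and, in fact, $H$ vanishes there; the coefficient $2$ in the definition of $H$ is essential for this. Therefore $N_{E}^{1)}(r,1;f)\le N(r,0;H)$. A second local computation shows that $H$ has only simple poles, and that these occur only among the multiple zeros of $f$ or $g$, the multiple poles of $f$ or $g$, the zeros of $f'$ (resp.\ $g'$) lying off the $0$- and $1$-points of $f$ (resp.\ $g$), and the $1$-points at which the multiplicities of $f$ and $g$ differ --- which under weight $2$ have multiplicity $\ge 3$ on both sides. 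This yields an estimate
\[
N_{E}^{1)}(r,1;f)\le\overline{N}(r,0;f| \geq 2)+\overline{N}(r,0;g| \geq 2)+\overline{N}(r,\infty;f| \geq 2)+\overline{N}(r,\infty;g| \geq 2)+\overline{N}_{*}(r,1;f,g)+N_{0}(r,0;f')+N_{0}(r,0;g')+S,
\]
where $N_{0}(r,0;f')$ counts the zeros of $f'$ off the $0$- and $1$-points of $f$. Feeding this into the second fundamental theorem $T(r,f)\le\overline{N}(r,0;f)+\overline{N}(r,1;f)+\overline{N}(r,\infty;f)-N_{0}(r,0;f')+S(r,f)$, after decomposing $\overline{N}(r,1;f)=N_{E}^{1)}(r,1;f)+\overline{N}_{E}^{(2}(r,1;f)+\overline{N}_{*}(r,1;f,g)$, the $N_{0}(r,0;f')$ terms cancel, and a careful estimation of the residual counting functions --- using the standard bounds relating zeros of $f',g'$ to the counting functions of $f,g$, and the weight--$2$ hypothesis which forces the mismatched $1$-points to have high multiplicity --- shows that what remains is dominated by $N_{2}(r,0;f)+N_{2}(r,0;g)+N_{2}(r,\infty;f)+N_{2}(r,\infty;g)$, up to $S(r,f)+S(r,g)$. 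This gives case (i) for $T(r,f)$, and the symmetric argument gives it for $T(r,g)$.

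Assume now that $H\equiv 0$. Integrating twice gives $\dfrac{f'}{(f-1)^{2}}=C\,\dfrac{g'}{(g-1)^{2}}$ with $C\neq 0$, and then
\[
\frac{1}{f-1}=\frac{A}{g-1}+B,\qquad A\neq 0,
\]
so $f$ is a M\"obius transform of $g$ and, in particular, $T(r,f)=T(r,g)+O(1)$. A finite case analysis on $(A,B)$ completes the proof. If $B=0$ and $A=1$, then $f\equiv g$, which is case (ii). If $A=B=-1$, then $f\cdot g\equiv 1$, which is case (iii). In each remaining case ($B=0$ and $A\neq 1$; or $A=B$ and $B\neq -1$; or $B\neq 0$ and $A\neq B$) the M\"obius relation matches $\overline{N}(r,\infty;f)$ and $\overline{N}(r,0;f)$ with reduced counting functions of suitable values of $g$ (for instance $\overline{N}(r,0;g)=\overline{N}(r,1-\tfrac{1}{A};f)$ when $B=0$, $\overline{N}(r,0;g)=\overline{N}(r,\infty;f)$ when $A=B\neq-1$, and $\overline{N}(r,\tfrac{B-A}{B};g)=\overline{N}(r,\infty;f)$ when $A\neq B$), so that the second fundamental theorem applied to the appropriate one of $f,g$ at three suitably chosen distinct values, together with $T(r,f)=T(r,g)+O(1)$, forces case (i) for both $T(r,f)$ and $T(r,g)$.

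I expect the main obstacle to be the bookkeeping in the case $H\not\equiv 0$: one has to pin down the exact local behaviour of $H$ (that it is analytic and actually vanishes at a common simple $1$-point, and has no pole of order greater than one), and, above all, verify that once the second fundamental theorem is applied every residual term --- notably $\overline{N}_{E}^{(2}(r,1;f)$, $\overline{N}_{*}(r,1;f,g)$ and $N_{0}(r,0;g')$ --- is absorbed into the four $N_{2}$-terms. The weight--$2$ hypothesis is used precisely here: it forces every $1$-point at which the multiplicities of $f$ and $g$ disagree to have multiplicity at least $3$, which keeps $\overline{N}_{*}(r,1;f,g)$ small enough for the absorption to work; an IM assumption would not suffice, which is why the conclusion genuinely requires weight $2$.
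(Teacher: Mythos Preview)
Theorem~B is not proved in this paper; it is quoted from Lahiri~\cite{il} as background, so there is no ``paper's own proof'' to compare against. That said, your proposal follows exactly the template the paper employs for its main Theorem~\ref{th1} (and which is the method of~\cite{il}): define $H$, split on $H\not\equiv 0$ versus $H\equiv 0$; in the first case bound $N_{E}^{1)}(r,1;f)$ by $N(r,H)$ (the paper's Lemma~\ref{lem1}), bound $N(r,H)$ by the multiple-point terms plus $\overline{N}_{*}(r,1;f,g)$ and the residual $N_{0}$-terms (the three-value analogue of Lemma~\ref{lem2}), and feed this into the second fundamental theorem at $0,1,\infty$; in the second case integrate to a M\"obius relation and do the finite case split on the coefficients. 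Your outline is correct and is the standard argument; the bookkeeping you flag as the main obstacle is handled in~\cite{il} by exactly the devices you describe, in particular the weight-$2$ hypothesis forcing mismatched $1$-points to have multiplicity $\ge 3$ so that $\overline{N}_{*}(r,1;f,g)$ is absorbed.
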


\begin{theoC}\label{rth2}
Let $f,g$ share $(1,2)$ and $a(\neq 0, 1, \infty)$ be a complex number. Then one of the following cases hold:
\begin{itemize}
\item[(i)] $2T(r)  \leq  N_{2}(r, \infty; f)+ N_{2}(r, \infty; g)+ N_{2}(r, 0;f)+ N_{2}(r, 0;g)+N_{2}(r, a)+ S(r,f) +S(r,g)$, 
where $N_2(r,a)=\max \{N_2(r, a;f), N_2(r, a;g)\}$ and $T(r) = \max \{T(r,f), T(r,g)\}$.
\item[(ii)] $f \equiv g$.
\item[(iii)] $f \cdot g = 1$.
\item[(iv)] $\frac{1}{f}+\frac{1}{g}=2$
\item[(v)] $f+g= 2$
\item[(vi)] $ g(a-f)=a^2$
\item[(vii)] $ f(a-g)=a^2.$
\end{itemize}
\end{theoC}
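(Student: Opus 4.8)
The plan is to carry the extra target $a$ through the classical auxiliary-function argument that lies behind Theorems~A and~B. Put
$$H=\left(\frac{f''}{f'}-\frac{2f'}{f-1}\right)-\left(\frac{g''}{g'}-\frac{2g'}{g-1}\right),$$
and split the proof according to whether $H\equiv 0$ or $H\not\equiv 0$. If any of the bilinear relations (ii)--(vii) already holds there is nothing to do, so in each branch I would assume that (ii)--(vii) all fail and work towards the estimate (i); the real issue is to make (i) come out with the term $N_2(r,a)$ sitting in exactly the stated place.

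First the case $H\not\equiv 0$. Because $f$ and $g$ share $(1,2)$, any $1$-point that is simple for one of the two functions is simple for the other and hence is not a pole of $H$; more generally the multiplicities at a common $1$-point can differ only when both are at least $3$. Consequently the (simple) poles of $H$ are confined to the poles of $f$ and $g$, those exceptional $1$-points, and the zeros of $f'$ and $g'$ lying over none of $0,1,\infty$. Since $m(r,H)=S(r,f)+S(r,g)$, this gives $T(r,H)=N(r,H)+S$ and, after the usual manipulation with the logarithmic-derivative lemma, a ``mixed'' bound for $\overline{N}(r,1;f)=\overline{N}(r,1;g)$ in terms of $\overline{N}(r,\infty;f),\overline{N}(r,\infty;g),\overline{N}(r,0;f),\overline{N}(r,0;g)$ and the usual terms counting the zeros of $f'$ and $g'$ over none of $0,1,\infty$. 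I would then invoke the Second Main Theorem for $f$ and for $g$ with the four targets $0,1,a,\infty$, substitute this bound for the $1$-term, add the two inequalities, and tidy up using $N_2\ge\overline{N}$ together with the weight-$2$ comparison of multiplicities at the $1$-points; this produces (i). These are the familiar Lahiri-type calculations --- the only point to watch is that $a$ enters solely through the Second Main Theorem, so the $1$-term must be absorbed completely, which is precisely what weight $2$ buys.

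Now the case $H\equiv 0$. Two integrations yield a bilinear relation $f=\dfrac{Ag+B}{Cg+D}$ with $AD-BC\ne 0$, and the sharing of $1$ forces $A+B=C+D$ (unless $f$ and $g$ have no $1$-points, a degenerate situation treated directly). I would then run through sub-cases according to whether $C=0$, and within each according to where the pole and the zero of the transformation lie relative to $\{0,\infty\}$, using the First and Second Main Theorems --- and Picard's theorem in any configuration that would force $f$ or $g$ to omit two values --- to discard the configurations that cannot occur. The surviving ``tight'' configurations are exactly the identity ($f\equiv g$), $f=1/g$, $f-1=-(g-1)$ (i.e.\ $f+g=2$), $\tfrac1f-1=-(\tfrac1g-1)$ (i.e.\ $\tfrac1f+\tfrac1g=2$), and the two non-symmetric maps $f=\dfrac{a(g-a)}{g}$ and $g=\dfrac{a(f-a)}{f}$ (i.e.\ $g(a-f)=a^2$ and $f(a-g)=a^2$); in every other configuration the bilinear map is ``generic'' enough that the counting already used in the first branch applies and again gives (i). This is where $a$ finally enters the conclusion: (vi) and (vii) are precisely the bilinear maps carrying one of $0,1,\infty$ onto $a$, equivalently those having $a$ as an omitted value of $f$ or of $g$.

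The main obstacle is this last sub-case analysis under $H\equiv 0$: one must verify that (vi) and (vii) really are distinct (the involution $f\leftrightarrow g$ interchanges them rather than fixing either), that no further exceptional bilinear transformation slips through the net, and --- most delicately --- that every non-exceptional bilinear relation genuinely falls back to (i) and not to some relation outside the list; this is exactly the point at which Theorem~B must be strengthened. A secondary, essentially cosmetic, nuisance is the bookkeeping needed to state (i) with $2T(r)$ on the left and $N_2(r,a)=\max\{N_2(r,a;f),N_2(r,a;g)\}$ on the right: one checks along the way that $T(r,f)$ and $T(r,g)$ are of the same order, so that the two one-sided estimates assemble into the symmetric form displayed.
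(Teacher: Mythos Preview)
The paper does not itself prove Theorem~C (it is cited from Lahiri~\cite{il}); what the paper proves is the two-value analogue Theorem~\ref{th1}, and that argument, restricted to a single value $a$, is precisely your plan: split on $H\equiv 0$ versus $H\not\equiv 0$, combine the $N(r,H)$ bound with the Second Main Theorem for $0,1,a,\infty$ in the non-vanishing case, and run a M\"obius subcase analysis in the vanishing case. Two minor deviations from your sketch are worth noting. In the $H\not\equiv 0$ branch the paper uses an \emph{asymmetric} bound on $N(r,H)$ (Lemma~\ref{lem2}, with the extra $a$-points carried only on the $f$ side so that the residual $\overline N_1(r,0;f')$ cancels exactly against the ramification term in the Second Main Theorem for $f$), and it proves the $T(r,f)$ and $T(r,g)$ estimates separately and then takes the maximum rather than adding them; your ``add the two inequalities'' would need extra bookkeeping to arrive at $N_2(r,a)=\max\{N_2(r,a;f),N_2(r,a;g)\}$ on the right. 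In the $H\equiv 0$ branch the fallback to (i) in a generic configuration does not reuse the first-branch counting but comes from a fresh application of the Second Main Theorem with an additional target value read off the M\"obius coefficients (such as $A/C$, $B/D$, or $1/(1-\alpha)$), which translates into an $\overline N(r,\infty;g)$ or $\overline N(r,0;g)$ term; the exceptional relations (ii)--(vii) are exactly the configurations in which that manufactured value collides with one of $0,a,\infty$.
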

Inspired by the Theorem B and Theorem C, we further tried to investigate the uniqueness of two meromorphic functions with weight 2. In the following we are stating the main result of our manuscript. 
\section{Main Results}
\begin{theorem}\label{th1}
Let, $f, g$ be two non-constant meromorphic functions which share $(1,2)$ and $a(\neq 0, 1, \infty), b(\neq a, 0, 1, \infty)$ be two complex numbers. Then one of the following cases holds:
\begin{itemize}
\item[(i)] $3T(r)  \leq  N_{2}(r, \infty; f)+ N_{2}(r, \infty; g)+ N_{2}(r, 0;f)+ N_{2}(r, 0;g)+N_{2}(r, a)+N_{2}(r, b)+ S(r,f) +S(r,g)$, 
where $N_2(r,a)=\max \{N_2(r, a;f), N_2(r, a;g)\}$ and similarly $N_2(r,b)$ is defined. Also, $T(r) = \max \{T(r,f), T(r,g)\}$.
\item[(ii)] $g(a-f)=a^{2}$. 
\item[(iii)] $g(a-f)=ab .$
\item[(iv)] $\frac{1}{f}+\frac{1}{g}=2.$
\item[(v)] $ \frac{a}{f}+\frac{b}{g}=1.$
\item[(vi)] $ (f-a)(g-a)=a(a-b).$
\item[(vii)] $ (f-a)(g-b)=b(a-b).$
\item[(viii)] $g(b-f)=ab .$
\item[(ix)] $g(b-f)=b^{2}$.
\item[(x)] $ \frac{a}{g}+\frac{b}{f}=1.$
\item[(xi)] $(f-b)(g-a)=a(b-a).$
\item[(xii)] $ (f-b)(g-b)=b(b-a).$
\item[(xiii)] $ f(a-g)=ab$
\item[(xiv)] $f(b-g)=b^{2}. $
\item[(xv)] $ f(a-g)=a^{2}$
\item[(xvi)] $ f(b-g)=(b-1)$
\item[(xvii)] $fg=1.$
\item[(xviii)] $ f+g=2$
\item[(xix)] $ \frac{f}{a}+\frac{g}{b}=1.$
\item[(xx)] $ \frac{f}{b}+\frac{g}{a}=1.$
\item[(xxi)] $ f \equiv g.$
\end{itemize}
\end{theorem}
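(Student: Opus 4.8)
The plan is to follow the standard route for weight-two sharing problems, based on the auxiliary function
\[
H=\left(\frac{f''}{f'}-\frac{2f'}{f-1}\right)-\left(\frac{g''}{g'}-\frac{2g'}{g-1}\right),
\]
splitting according to whether $H\equiv 0$ or not. Suppose first that $H\not\equiv 0$. Since $f$ and $g$ share $(1,2)$, every $1$-point that is simple for both $f$ and $g$ is a zero of $H$, while every $1$-point of multiplicity $\ge 2$ for both fails to be a pole of $H$; hence the poles of $H$ are confined to the multiple $0$-, $a$-, $b$- and $\infty$-points of $f$ and $g$, to the zeros of $f'$ and $g'$ lying off $\{0,1,a,b\}$, and to the term $\overline{N}_{*}(r,1;f,g)$. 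As $H$ is built from logarithmic derivatives, $m(r,H)=S(r,f)+S(r,g)$, so $N_{E}^{1)}(r,1;f)\le N(r,\infty;H)+S(r,f)+S(r,g)$. Now apply the second fundamental theorem in its sharp form to $f$ with the five targets $0,1,a,b,\infty$, which produces the leading factor $q-2=3$ (the source of the $3\,T(r)$ in conclusion (i)):
\[
3\,T(r,f)\le \overline{N}(r,0;f)+\overline{N}(r,1;f)+\overline{N}(r,a;f)+\overline{N}(r,b;f)+\overline{N}(r,\infty;f)-N_{0}(r,0;f')+S(r,f),
\]
and similarly for $g$. Adding the two, the $N_{0}$-terms cancel the corresponding contribution to $N(r,\infty;H)$, the equal terms $\overline{N}(r,1;f)=\overline{N}(r,1;g)$ are absorbed, and the remaining reduced counting functions merge with their multiple-point parts into the $N_{2}$-quantities; conclusion (i) follows.

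Suppose now $H\equiv 0$. Writing $H=\bigl(\log\tfrac{f'}{(f-1)^{2}}\bigr)'-\bigl(\log\tfrac{g'}{(g-1)^{2}}\bigr)'$ and integrating twice gives a bilinear relation $\tfrac{1}{f-1}=\tfrac{A}{g-1}+B$ with $A\ne 0$, that is, $g=\phi(f)$ for a Möbius transformation $\phi$. I would then carry out a case analysis organised by the value of $B$, with the critical thresholds $B=0,\,-1,\,\tfrac{1}{a-1},\,\tfrac{1}{b-1}$ at which $f$ fails to take $\infty,\,0,\,a,\,b$ respectively, together with the thresholds read symmetrically from $g$, and also by which of $0,a,b,\infty$ coincide with the values $g$ is forced to assume at the exceptional points of $f$. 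In the branches where $f$, and hence $g$, omits the value $1$, one checks directly from the second fundamental theorem that conclusion (i) already holds; in the remaining branches the relation, together with the coincidences forced among $\{0,1,a,b,\infty\}$, pins $(A,B)$ down to a finite list whose entries are exactly the explicit identities (ii)--(xxi) (for instance $\phi=\mathrm{id}$ gives (xxi), while $fg=1$, $f+g=2$ and $\tfrac1f+\tfrac1g=2$ correspond to $(A,B)=(-1,-1),\,(-1,0),\,(-1,-2)$). Where this direct bookkeeping becomes heavy, one can instead feed the bilinear relation into Theorem~C applied once with $a$ and once with $b$ and discard the incompatible combinations of the resulting conclusions.

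The real difficulty is entirely in the case $H\equiv 0$: the bilinear relation admits a large number of configurations of $\phi$ relative to the five distinguished values, and one must work through all of them, discarding those incompatible with $f$ and $g$ being non-constant meromorphic functions (for example configurations forcing a non-trivial polynomial identity between $f$ and $g$, hence constancy), keeping careful track of the degenerate branches in which $f$ and $g$ omit $1$, and verifying that precisely the twenty forms (ii)--(xxi) survive, none missing and none redundant. By contrast, the case $H\not\equiv 0$ is routine once the pole structure of $H$ has been correctly identified, and in the second case the identities $T(r,f)=T(r,g)+O(1)$ and $\overline{N}(r,c;f)=\overline{N}(r,\phi(c);g)$ make the transfer of counting functions essentially mechanical.
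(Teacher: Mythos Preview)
Your overall architecture matches the paper's proof exactly: split on $H\equiv 0$ versus $H\not\equiv 0$; in the first case bound $N_{E}^{1)}(r,1;f)$ by $N(r,\infty;H)$, feed this into the second fundamental theorem with the five targets $0,1,a,b,\infty$, and collapse to the $N_{2}$--inequality; in the second case read off the M\"obius relation and run a case analysis against the distinguished values, showing that every branch either already satisfies (i) via the second fundamental theorem with an auxiliary fifth value, or lands in one of (ii)--(xxi).

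Two points in your write-up need correction. First, in the $H\not\equiv 0$ case you should not \emph{add} the two second-fundamental-theorem inequalities: the paper (and the standard argument) derives the bound for $3T(r,f)$ and for $3T(r,g)$ \emph{separately}, each with its own asymmetric pole bound for $H$ (e.g.\ $\overline N_{1}(r,0;f')+\overline N_{0}(r,0;g')$ when estimating $T(r,f)$), and only then takes the maximum; adding doubles the $N(r,\infty;H)$ contribution and does not yield (i) as stated. Second, in the $H\equiv 0$ case the mechanism for discarding branches is not that they ``force constancy'': rather, in each generic branch the M\"obius relation produces a fifth value (such as $A/C$, $B/D$, $1-\gamma$, $\tfrac{1}{1-\alpha}$, etc.) distinct from $0,a,b,\infty$ at which one can apply the second fundamental theorem and bound the corresponding counting function by one already appearing in (i); this is how the paper routes those branches back to conclusion (i). Your suggestion to shortcut via Theorem~C is plausible but is not what the paper does; it performs the full case split directly on the parameters of the M\"obius map.
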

\section{Lemmas}
Let $f$ and $g$ be two non constant meromorphic functions. Let us define $H$ as 
$$H=\left(\frac{f''}{f'}-\frac{2f'}{f-1}\right)-\left(\frac{g''}{g'}-\frac{2g'}{g-1}\right).$$
\begin{lem}[\cite{il}]\label{lem1}
If $f, g$ share $(1,1)$ and $H \not \equiv 0$, then 
\begin{itemize}
\item[(i)] $\overline{N}(r,1;f|=1) \leq N(r,H)+S(r,f)+S(r,g).$
\item[(ii)] $\overline{N}(r,1;g|=1) \leq N(r,H)+S(r,f)+S(r,g).$
\end{itemize}
\end{lem}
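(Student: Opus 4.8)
The plan is to prove both inequalities by exhibiting the relevant simple $1$-points as zeros of $H$ and then estimating their number through the First Fundamental Theorem together with the logarithmic derivative estimate. The hypothesis that $f,g$ share $(1,1)$ is used at the very outset: because $E_1(1;f)=E_1(1;g)$, a $1$-point of $f$ of multiplicity exactly one is recorded precisely once in $E_1(1;f)$, hence precisely once in $E_1(1;g)$, which forces it to be a simple $1$-point of $g$ as well. Consequently the simple $1$-points of $f$ and of $g$ coincide, and it suffices to examine the behaviour of $H$ at a common simple $1$-point.

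The crux of the argument --- and the step I expect to require the most care --- is the local analysis at a common simple $1$-point $z_0$. Setting $t=z-z_0$ and writing $f-1=a_1t+a_2t^2+\cdots$ with $a_1\neq0$, a direct expansion gives $f''/f'=2a_2/a_1+O(t)$ and $2f'/(f-1)=2/t+2a_2/a_1+O(t)$, whence
\[
\frac{f''}{f'}-\frac{2f'}{f-1}=-\frac{2}{t}+O(t).
\]
The two decisive features are that the simple pole has residue $-2$ regardless of the Taylor data and that the constant terms $2a_2/a_1$ cancel identically. Since $z_0$ is also a simple $1$-point of $g$ (so $g'(z_0)\neq0$ and the same expansion holds with $g$ in place of $f$), subtracting the two blocks cancels both the $-2/t$ principal parts and the vanishing constant terms. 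Therefore $H$ is holomorphic at $z_0$ with $H(z_0)=0$; every simple $1$-point of $f$ is thus a zero of $H$, and counting each once yields $N(r,1;f|=1)\le N(r,0;H)$.

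To globalise, observe that $H$ is a finite combination of terms $f''/f'=(f')'/f'$ and $2f'/(f-1)=2(f-1)'/(f-1)$ together with their $g$-analogues; hence the Lemma on the Logarithmic Derivative gives $m(r,H)\le S(r,f)+S(r,g)$. The standing hypothesis $H\not\equiv0$ guarantees that $1/H$ is a genuine meromorphic function, so the First Fundamental Theorem applies and $N(r,0;H)\le T(r,H)+O(1)=N(r,H)+m(r,H)+O(1)$. Chaining these estimates,
\[
\overline{N}(r,1;f|=1)=N(r,1;f|=1)\le N(r,0;H)\le N(r,H)+S(r,f)+S(r,g),
\]
which is (i). Assertion (ii) follows identically: the simple $1$-points of $g$ are exactly the same points, $H$ changes only by an overall sign when $f$ and $g$ are interchanged (so $N(r,H)$ is unaffected), and the same chain yields the bound for $\overline{N}(r,1;g|=1)$. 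The only point demanding real vigilance is the local expansion of the second paragraph; everything else is a routine application of standard Nevanlinna machinery.
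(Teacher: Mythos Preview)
Your argument is correct and is precisely the standard proof. The paper itself does not prove this lemma at all: it is quoted verbatim from Lahiri \cite{il} and invoked as a black box, so there is no in-paper proof to compare against. What you have written is essentially the original argument from \cite{il}---the weight-one sharing forces the simple $1$-points to coincide, the local expansion shows that each such point is a zero of $H$ (the cancellation of both the $-2/t$ pole and the constant term $2a_2/a_1$ is exactly the point), and then $N(r,0;H)\le T(r,H)+O(1)=N(r,H)+m(r,H)+O(1)$ together with $m(r,H)=S(r,f)+S(r,g)$ finishes. Nothing is missing.
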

\begin{lem}\label{lem2}
Let $f,g$ share $(1,0)$ and $H \not \equiv 0$. Then for any complex numbers $a(\neq 0,1,\infty),~b(\neq 0, 1, \infty)$ and $a \neq b$, we have 
\begin{eqnarray*}
N(r,H) & \leq & \overline{N}(r, \infty;f|\geq 2)+ \overline{N}(r, 0;f|\geq 2)+ \overline{N}(r, \infty;g|\geq 2)+ \overline{N}(r, 0;g|\geq 2)\\
& & +\overline{N}(r, a;f|\geq 2)+ \overline{N}(r, b;f|\geq 2)+ \overline{N}_{*}(r, 1; f, g)+ \overline{N}_{1}(r, 0;f')\\
& & + \overline{N}_{0}(r, 0;g'),
\end{eqnarray*}
where $\overline{N}_{1}(r, 0;f')$ is the reduced counting function of the zeros of $f'$, which do not come from the zeros of $f(f-1)(f-a)(f-b)$ and $\overline{N}_{0}(r, 0;f')$ is the reduced counting function of the zeros of $f'$, which do not come from the zeros of $f(f-1)$. Similarly $\overline{N}_{1}(r, 0;g')$ and $\overline{N}_{0}(r, 0;g')$ are defined.
\end{lem}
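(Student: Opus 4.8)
The plan is to read off the poles of $H$ from a purely local analysis, after first putting $H$ in a more transparent form. The crucial observation is the identity
$$\frac{f''}{f'}-\frac{2f'}{f-1}=\frac{F''}{F'},\qquad F:=\frac{1}{f-1},$$
together with its analogue $\frac{g''}{g'}-\frac{2g'}{g-1}=\frac{G''}{G'}$, $G:=\frac1{g-1}$; both follow from the elementary identity $\dfrac{(1/u)''}{(1/u)'}=\dfrac{u''}{u'}-\dfrac{2u'}{u}$ taken with $u=f-1$ and $u=g-1$. Consequently
$$H=\bigl(\log F'\bigr)'-\bigl(\log G'\bigr)'=\Bigl(\log\tfrac{F'}{G'}\Bigr)',$$
so every pole of $H$ is simple and lies at a zero or a pole of $F'$ or of $G'$; in particular $N(r,H)=\overline N(r,H)$, and it is enough to locate those points and check that each is counted, once, by the right-hand side.

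First I would tabulate the zeros and poles of $F'=-f'/(f-1)^2$: a short local computation shows that $F'$ has a zero of order $s-1$ at a pole of $f$ of order $s$, a zero of order $m-1$ wherever $f$ attains one of the values $0,a,b$ with multiplicity $m$, a zero of order equal to that of $f'$ at any zero of $f'$ lying over none of $0,1,a,b,\infty$, a pole of order $p+1$ at a $1$-point of $f$ of order $p$, and nothing else. Hence $(\log F')'$ has a simple pole exactly at the multiple poles of $f$, the multiple $0$-, $a$- and $b$-points of $f$, the $1$-points of $f$, and the zeros of $f'$ not lying over $0,1,a,b$ --- this last family being counted precisely by $\overline N_1(r,0;f')$. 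The identical computation for $G'$ shows that $(\log G')'$ has simple poles only at the multiple poles and multiple $0$-points of $g$, at the $1$-points of $g$, and at the zeros of $g'$ not lying over $0,1$, the latter being counted by $\overline N_0(r,0;g')$ --- which already absorbs the multiple $a$- and $b$-points of $g$, so no separate terms for those are needed.

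It remains to see how the two logarithmic derivatives interact in $H$. Since $f$ and $g$ share $(1,0)$, their $1$-points coincide as sets; away from these common $1$-points, every pole of $H=(\log F')'-(\log G')'$ is a pole of one of the two summands and is therefore already covered by a term on the $f$-side or on the $g$-side (perhaps by both, which only makes the bound wasteful, never false). At a common $1$-point $z_0$, where $f-1$ and $g-1$ vanish to orders $p$ and $q$, the ratio $F'/G'$ has a zero or pole of order $q-p$; so when $p=q$ it is holomorphic and non-vanishing at $z_0$ and $H$ is holomorphic there, whereas when $p\neq q$ the function $H$ has a simple pole at $z_0$, and such points are counted by $\overline N_*(r,1;f,g)=\overline N_L(r,1;f)+\overline N_L(r,1;g)$. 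Collecting all of this yields the stated inequality.

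The individual computations are routine; the one step that needs genuine care is the cancellation at a common $1$-point with $p=q$, where the residue of $H$ vanishes but $H$ could a priori still have a pole --- this is exactly where writing $H$ as $\bigl(\log(F'/G')\bigr)'$ does the work, since $F'/G'$ being holomorphic and non-vanishing there forces $H$ to be holomorphic there. The remaining effort is organisational: keeping track of points that are simultaneously special for $f$ and for $g$ (a common multiple pole, a zero of $f$ that is a pole of $g$, and so on), where the estimate merely overcounts, and handling the deliberate asymmetry between the values $a,b$ (isolated on the $f$-side, absorbed into $\overline N_0(r,0;g')$ on the $g$-side). Neither raises an analytic difficulty.
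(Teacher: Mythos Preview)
Your argument is correct, and it reaches the same inequality as the paper, but by a genuinely different device. The paper proceeds by direct inspection of the expression $H=\bigl(\tfrac{f''}{f'}-\tfrac{2f'}{f-1}\bigr)-\bigl(\tfrac{g''}{g'}-\tfrac{2g'}{g-1}\bigr)$: it lists the possible sources of poles (poles of $f,g$; zeros of $f',g'$; $1$-points of $f,g$), then asserts, by local calculation, that simple poles/zeros/$a$-/$b$-points of $f$ and simple poles/zeros of $g$ do not contribute, and that common $1$-points of equal multiplicity cancel. Your route instead passes through the identity $\tfrac{f''}{f'}-\tfrac{2f'}{f-1}=\tfrac{F''}{F'}$ with $F=1/(f-1)$ (and its $g$-analogue), rewriting $H=\bigl(\log(F'/G')\bigr)'$. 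This buys two things the paper only states informally: it makes immediate that all poles of $H$ are simple (so $N(r,H)=\overline N(r,H)$), and it turns the cancellation at common $1$-points with $p=q$ into the transparent fact that $F'/G'$ is holomorphic and non-vanishing there. The paper's approach, on the other hand, is shorter to write down and avoids introducing the auxiliary functions $F,G$; it relies on the reader's familiarity with the standard local behaviour of $\tfrac{h''}{h'}-\tfrac{2h'}{h-1}$ at special points. Both organise the remaining bookkeeping (the asymmetric treatment of $a,b$ on the $f$- versus $g$-side, and the harmless overcounting at coincidental special points) in the same way.
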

\begin{proof}
Given $$H=\left(\frac{f''}{f'}-\frac{2f'}{f-1}\right)-\left(\frac{g''}{g'}-\frac{2g'}{g-1}\right).$$
We see that the poles of $H$ comes from
\begin{itemize}
\item[(i)] poles of $f$ and $g$,
\item[(ii)] zeros of $f'$ and $g'$ and 
\item[(iii)] zeros of $f-1$ and $g-1$.
\end{itemize}
Also we see that zeros of $f'$ can be divided into two parts, firstly, the zeros of $f'$ coming from the zeros of $f$, $a, b$-points of $f$, $1$-points of $f$ and secondly the zeros of $f'$ not coming from the above points of $f$. Here $\overline{N}_{1}(r, 0;f')$ is the counting function of the zeros of $f'$, which do not come from the zeros of $f(f-1)(f-a)(f-b)$. Similarly we have divided the zeros of $g'$ into two parts, firstly those coming from the $0, 1$ -points of $g$ and secondly those not coming from the $0, 1$ -points of $g$. Also here we have taken $\overline{N}_{0}(r, 0;g')$ to be the reduced counting function of the zeros of $g'$, which do not come from the zeros of $g(g-1)$.\par
We also have seen that the simple zeros, $1$-points, simple $a$-points, simple $b$-points, simple poles of $f$, do not contribute to the poles of $H$ and the same situation is for the function $g$. Also, we have seen that when the  multiplicities of zeros of $f-1$ and $g-1$ are equal, then it also do not contribute to the poles of $H$. Therefore, considering all these, we can say that poles of $H$ can only come from
\begin{itemize}
\item[(i)] multiple poles of $f$ and $g$,
\item[(ii)] multiple $a, b$-points of $f$
\item[(iii)] multiple zeros of $f$ and $g$.
\item[(iv)] zeros of $f-1$ and $g-1$, where the multiplicities are different,
\item[(v)] zeros of $f'$ not coming from the $0, 1, a, b$-points of $f$ and
\item[(vi)] zeros of $g'$ not coming from the $0, 1$-points of $g$.
\end{itemize}
Therefore, 
\begin{eqnarray*}
N(r,H) & \leq & \overline{N}(r, \infty;f|\geq 2)+ \overline{N}(r, 0;f|\geq 2)+ \overline{N}(r, \infty;g|\geq 2)+ \overline{N}(r, 0;g|\geq 2)\\
& & +\overline{N}(r, a;f|\geq 2)+ \overline{N}(r, b;f|\geq 2)+ \overline{N}_{*}(r, 1; f, g)+ \overline{N}_{1}(r, 0;f')\\
& & + \overline{N}_{0}(r, 0;g').
\end{eqnarray*}
\end{proof}

\section{Proof of the Theorem}
\begin{proof}{\textbf{\textit{(Proof of Theorem \ref{th1})}}}
Let, $H \not \equiv 0$.\\
Since $f$ and $g$ share $1$ with weight $2.$ Then by Lemma (\ref{lem1}) and Lemma (\ref{lem2}) we have 
\begin{eqnarray}
\nonumber N(r, 1;f|=1) &\leq & N(r, H)+ S(r,f)+ S(r,g) \\
\nonumber & \leq & \overline{N}(r, \infty;f|\geq 2)+ \overline{N}(r, 0;f|\geq 2)+ \overline{N}(r, \infty;g|\geq 2)+ \overline{N}(r, 0;g|\geq 2)\\
\nonumber & & +\overline{N}(r, a;f|\geq 2)+ \overline{N}(r, b;f|\geq 2)+ \overline{N}_{*}(r, 1; f, g)+ \overline{N}_{1}(r, 0;f')\\
\label{pfeq1}& & + \overline{N}_{0}(r, 0;g')+S(r,f)+ S(r,g),
\end{eqnarray}
where $\overline{N}_{1}(r, 0;f')$ is the counting function of the zeros of $f'$, which do not come from the zeros of $f(f-1)(f-a)(f-b)$ and $\overline{N}_{0}(r, 0;g')$ is the counting function of the zeros of $g'$, which do not come from the zeros of $g(g-1)$.\\ 
Using Lemma 4 of (\cite{il}), from (\ref{pfeq1}) we obtain
\begin{eqnarray}
\nonumber N(r, 1;f|=1) &\leq & \overline{N}(r, \infty;f|\geq 2)+ \overline{N}(r, 0;f|\geq 2)+ \overline{N}(r, \infty;g|\geq 2)+ \overline{N}(r, 0;g|\geq 2)\\
\nonumber & & +\overline{N}(r, a;f|\geq 2)+ \overline{N}(r, b;f|\geq 2)+ \overline{N}_{1}(r, 0;f')+ \overline{N}(r, \infty,g)+\overline{N}(r, 0,g)\\
\label{pfeq2}& &-\overline{N}(r, 1;g|\geq 2)+ S(r,f)+S(r,g).
\end{eqnarray}
By Second Fundamental theorem we get
\begin{eqnarray}
\label{pfeq3}3T(r,f) & \leq & \overline{N}(r,f)+ \overline{N}(r,1;f)+ \overline{N}(r,0;f)+\overline{N}(r,a;f)+\overline{N}(r,b;f)\\
\nonumber & & -N_{1}(r,0;f')+S(r,f)
\end{eqnarray}
Since $f$ and $g$ share $(1,2)$, then 
\begin{eqnarray}
\label{pfeq4}\overline{N}(r,1;f)& = &\overline{N}(r,1;f|=1)+\overline{N}(r,1;f|\geq 2)\\
\nonumber &= & \overline{N}(r,1;f|=1)+ \overline{N}(r,1;g|\geq 2).
\end{eqnarray}
Using (\ref{pfeq2}), (\ref{pfeq3}) and (\ref{pfeq4}) we obtain 
\begin{eqnarray}
3T(r,f) & \leq & N_{2}(r, \infty;f)+ N_{2}(r, \infty;g)+ N_{2}(r, 0;f)+N_{2}(r, 0;g)+ N_{2}(r, a;f)\\
\nonumber & & +N_{2}(r, b;f)+ S(r,f)+ S(r,g).
\end{eqnarray}
Similarly, 
\begin{eqnarray}
3T(r,g) & \leq & N_{2}(r, \infty;f)+ N_{2}(r, \infty;g)+ N_{2}(r, 0;f)+N_{2}(r, 0;g)+ N_{2}(r, a;g)\\
\nonumber & & +N_{2}(r, b;g)+ S(r,f)+ S(r,g).
\end{eqnarray}
Using above two results, we can obtain $(i)$.\\
Let us now assume that $H=0$. Then, 
\begin{eqnarray}\label{pfeq5}
f=\frac{Ag+B}{Cg+D},
\end{eqnarray}
where $(AD-BC) \neq 0.$\par
Obviously 
\begin{eqnarray}\label{pfeq6}
T(r,f)=T(r,g)+O(1).
\end{eqnarray}
\textbf{Case:1} Let $AC \neq 0.$ Then
$$f-\frac{A}{C}=\frac{B-\frac{AD}{C}}{Cg+D}.$$
\textbf{Subcase:1.1} Let, $\frac{A}{C}\neq a,b.$
Using Second Fundamental theorem we get,
\begin{eqnarray}
\label{pfeq7}3T(r,f) & \leq & \overline{N}(r,f)+ \overline{N}(r,0;f)+ \overline{N}(r,a;f)+\overline{N}(r,b;f)+\overline{N}(r,\frac{A}{C};f)+S(r,f). \\
\nonumber & \leq & \overline{N}(r,f)+ \overline{N}(r,0;f)+ \overline{N}(r,a;f)+\overline{N}(r,b;f)+\overline{N}(r,g)+S(r,f).
\end{eqnarray}
Using (\ref{pfeq6}) and (\ref{pfeq7}) we can easily obtain $(i)$.  \\
\textbf{Subcase:1.2} Let, $\frac{A}{C}= a$ and $\frac{A}{C} \neq b.$ \\
\textbf{Subcase:1.2.1} Let, $BD=0.$\\
\textbf{Subcase:1.2.1.1} Let, $B \neq 0, D=0$. Then 
\begin{eqnarray}
\label{pfeq7.1}f = a +\frac{\gamma_0}{g},
\end{eqnarray}
where $\gamma_0= \frac{B}{C}.$ \\
Let $1$ be an evP of $f$ and so of $g$, then using second fundamental theorem we get
\begin{eqnarray}
\nonumber \label{pfeq8}3T(r,f) & \leq & \overline{N}(r,f)+ \overline{N}(r,0;f)+ \overline{N}(r,a;f)+\overline{N}(r,b;f)+\overline{N}(r,1;f)+S(r,f). \\
\nonumber & \leq & \overline{N}(r,f)+ \overline{N}(r,0;f)+ \overline{N}(r,a;f)+\overline{N}(r,b;f)+S(r,f),
\end{eqnarray}
which using (\ref{pfeq6}), easily obtains $(i)$.\\
Let, $1$ be not an evP of $f$ and $g$. Therefore, from (\ref{pfeq7.1}), we obtain, 
$$ f = a+\frac{1-a}{g}. $$
Then if $a \neq 1-\frac{1}{a}$ and $b \neq 1-\frac{1}{a}$, then using second fundamental theorem 
\begin{eqnarray}
\nonumber \label{pfeq9}3T(r,g) & \leq & \overline{N}(r,g)+ \overline{N}(r,0;g)+ \overline{N}(r,a;g)+\overline{N}(r,b;g)+\overline{N}(r,1-\frac{1}{a};g)+S(r,g). \\
\nonumber & \leq & \overline{N}(r,g)+ \overline{N}(r,0;g)+ \overline{N}(r,a;g)+\overline{N}(r,b;g)+\overline{N}(r,0;f)+S(r,g),
\end{eqnarray}
which using (\ref{pfeq6}), easily obtains $(i)$.\\
If $a = 1-\frac{1}{a}$ and $b \neq 1-\frac{1}{a}$, then $a= -\omega, -\omega^{2}.$, which eventually implies $ g(a-f) \equiv a^2.$\\
Also if $a \neq 1-\frac{1}{a}$ and $b = 1-\frac{1}{a}$, then it leads us to $g(a-f)=ab$.\\
\textbf{Subcase:1.2.1.2} Let, $B=0, D \neq 0$. Then 
\begin{eqnarray}
\label{pfeq15} f-a= - \frac{a}{1+\beta_0 g}, 
\end{eqnarray} 
where $\beta_0 = \frac{C}{D}.$ \par
Let, $1$ be an evP of $f$ and then so of $g$. Therefore, by second fundamental theorem, we obtain 
\begin{eqnarray}
\nonumber 3T(r,f) & \leq & \overline{N}(r,f)+ \overline{N}(r,0;f)+ \overline{N}(r,a;f)+\overline{N}(r,b;f)+\overline{N}(r,1;f)+S(r,f)\\
& = & \overline{N}(r,f)+ \overline{N}(r,0;f)+ \overline{N}(r,a;f)+\overline{N}(r,b;f)+S(r,f).
\end{eqnarray}
From here, using (\ref{pfeq6}), we can easily obtain $(i)$.  \\
Let $1$ be not an evP of $f$ and $g$. Thus from (\ref{pfeq15}) we get, $$f= \frac{ag}{(a-1)+g}.$$ 
Then if $a \neq \frac{1}{2}, a+b\neq 1$, using second fundamental theorem we obtain, 
\begin{eqnarray}
\nonumber 3T(r,g) & \leq & \overline{N}(r,g)+ \overline{N}(r,0;g)+ \overline{N}(r,a;g)+\overline{N}(r,b;g)+\overline{N}(r,1-a;g)+S(r,g).\\
\nonumber & = & \overline{N}(r,g)+ \overline{N}(r,0;g)+ \overline{N}(r,a;g)+\overline{N}(r,b;g)+\overline{N}(r,f)+S(r,g).
\end{eqnarray} 
From here, using (\ref{pfeq6}), we can easily obtain $(i)$.  \\
If $a = \frac{1}{2}, a+b\neq 1$, then $$\frac{1}{f}+\frac{1}{g}=2.$$\\
If $a \neq \frac{1}{2}, a+b = 1$, then $$\frac{a}{f}+\frac{b}{g}=1.$$\\
\textbf{Subcase:1.2.2} Let, $BD \neq 0$. Then $\frac{B}{D}\neq a$, since $(AD-BC) \neq 0$.\\
Then if $\frac{B}{D} \neq b,$. Then by Second Fundamental theorem we get 
\begin{eqnarray}
\nonumber 3T(r,f) & \leq & \overline{N}(r,f)+ \overline{N}(r,0;f)+ \overline{N}(r,a;f)+\overline{N}(r,b;f)+\overline{N}(r,\frac{B}{D};f)+S(r,f).\\
\nonumber & = & \overline{N}(r,f)+ \overline{N}(r,0;f)+ \overline{N}(r,a;f)+\overline{N}(r,b;f)+\overline{N}(r, 0;g)+S(r,f).
\end{eqnarray}
From here, using (\ref{pfeq6}), we can easily obtain $(i)$.  \\
If $\frac{B}{D} = b$, then from (\ref{pfeq5}), we get 
\begin{eqnarray}
\nonumber f- \frac{A}{C} & = & \left(\frac{B}{D}\right)\left\{\frac{1-(\frac{A}{C})(\frac{D}{B})}{1+(\frac{C}{D})g}\right\}\\
\label{pfeq16}\implies (f-a) &=& b \left\{\frac{1-(\frac{a}{b})}{1+\alpha_0 g}\right\},
\end{eqnarray}
where $\alpha_0= \frac{C}{D}$. \\
Let, $1$ be an evP of $f$ and $g$, then by Second Fundamental theorem we get 
\begin{eqnarray}
\nonumber 3T(r,f) & \leq & \overline{N}(r,f)+ \overline{N}(r,0;f)+ \overline{N}(r,a;f)+\overline{N}(r,b;f)+\overline{N}(r,1;f)+S(r,f)\\
\nonumber & \leq & \overline{N}(r,f)+ \overline{N}(r,0;f)+ \overline{N}(r,a;f)+\overline{N}(r,b;f)+S(r,f),
\end{eqnarray} 
using (\ref{pfeq6}), we can easily obtain $(i)$.\\
Let $1$ be not an evP of $f$ and $g$, then from (\ref{pfeq16}), we get that $\alpha_0 = \frac{1-b}{a-1}$. Then 
$$ f-a = \frac{(a-b)(a-1)}{(b-1)}\left\{\frac{1}{g+(\frac{a-1}{1-b})}\right\}.$$ 
If $(\frac{a-1}{b-1}) \neq a, b$, then from Second Fundamental theorem we obtain  
\begin{eqnarray}
\nonumber 3T(r,g) & \leq & \overline{N}(r,g)+ \overline{N}(r,0;g)+ \overline{N}(r,a;g)+\overline{N}(r,b;g)+\overline{N}(r,\small{(\frac{a-1}{b-1})};g)+S(r,f)\\
\nonumber & \leq & \overline{N}(r,g)+ \overline{N}(r,0;g)+ \overline{N}(r,a;g)+\overline{N}(r,b;g)+\overline{N}(r,f)+S(r,f),
\end{eqnarray}
using (\ref{pfeq6}), we can easily obtain $(i)$.\\
If $(\frac{a-1}{b-1}) = a$, then $(f-a)(g-a)=a(a-b).$ Also if $(\frac{a-1}{b-1}) = b$, then $(f-a)(g-b)=b(a-b).$\\
\textbf{Subcase:1.3} Let, $\frac{A}{C}= b$ and $\frac{A}{C} \neq a.$ Then $\frac{B}{D}\neq b$, since $(AD-BC) \neq 0$.\\
\textbf{Subcase:1.3.1} Let, $BD=0.$\\
\textbf{Subcase:1.3.1.1} Let, $B \neq 0, D=0$. Therefore $$f=b+\frac{\xi}{g},$$
where $\xi = \frac{B}{C}.$\\
If $1$ be an evP of $f$ and $g$, then using second fundamental theorem we get, 
\begin{eqnarray}
\nonumber 3T(r,f) & \leq & \overline{N}(r,f)+ \overline{N}(r,0;f)+ \overline{N}(r,a;f)+\overline{N}(r,b;f)+\overline{N}(r,1;f)+S(r,f)\\
\nonumber& = & \overline{N}(r,f)+ \overline{N}(r,0;f)+ \overline{N}(r,a;f)+\overline{N}(r,b;f)+S(r,f).
\end{eqnarray}
From here, using (\ref{pfeq6}), we can easily obtain $(i)$. \\
Let, $1$ is not an evP of $f$ and $g$. Consequently we get $$f=b+\frac{(1-b)}{g}.$$\\
Then if $a \neq 1-\frac{1}{b}$ and $b \neq 1-\frac{1}{b}$, then using second fundamental theorem we get, 
\begin{eqnarray}
\nonumber 3T(r,g) & \leq & \overline{N}(r,g)+ \overline{N}(r,0;g)+ \overline{N}(r,a;g)+\overline{N}(r,b;g)+\overline{N}(r,1-\frac{1}{b};g)+S(r,g)\\
\nonumber& = & \overline{N}(r,g)+ \overline{N}(r,0;g)+ \overline{N}(r,a;g)+\overline{N}(r,b;g)+\overline{N}(r,0;f)+S(r,g).
\end{eqnarray}
From here, using (\ref{pfeq6}), we can easily obtain $(i)$. \\
If $a = 1-\frac{1}{b}$ and $b \neq 1-\frac{1}{b}$, then $g(b-f)=ab.$\\
Also if $a \neq 1-\frac{1}{b}$ and $b = 1-\frac{1}{b}$, then $g(b-f)=b^{2}.$\\
\textbf{Subcase:1.3.1.2} Let, $B=0, D \neq 0$. Therefore, $$ f-b=- \frac{b}{1+ \eta g},$$ 
where $\eta = \frac{C}{D}.$\\
If $1$ is an evP of $f$ and $g$, then using second fundamental theorem we get, 
\begin{eqnarray}
\nonumber 3T(r,f) & \leq & \overline{N}(r,f)+ \overline{N}(r,0;f)+ \overline{N}(r,a;f)+\overline{N}(r,b;f)+\overline{N}(r,1;f)+S(r,f)\\
\nonumber& = & \overline{N}(r,f)+ \overline{N}(r,0;f)+ \overline{N}(r,a;f)+\overline{N}(r,b;f)+S(r,f).
\end{eqnarray}
From here, using (\ref{pfeq6}), we can easily obtain $(i)$. \\
Let $1$ is not an evP of $f$ and $g$. Therefore, $$ f=\frac{bg}{g+(b-1)}.$$ 
Then if $b \neq \frac{1}{2}, a+b\neq 1$, then using second fundamental theorem we get, 
\begin{eqnarray}
\nonumber 3T(r,g) & \leq & \overline{N}(r,g)+ \overline{N}(r,0;g)+ \overline{N}(r,a;g)+\overline{N}(r,b;g)+\overline{N}(r,1-b;g)+S(r,g)\\
\nonumber & = & \overline{N}(r,g)+ \overline{N}(r,0;g)+ \overline{N}(r,a;g)+\overline{N}(r,b;g)+\overline{N}(r,f)+S(r,g).
\end{eqnarray}
From here, using (\ref{pfeq6}), we can easily obtain $(i)$. \\
If $b = \frac{1}{2}, a+b\neq 1$, then $$\frac{1}{f}+\frac{1}{g}=2.$$\\.\\
Also if $b \neq \frac{1}{2}, a+b = 1$, then $$ \frac{a}{g}+\frac{b}{f}=1.$$ \\
\textbf{Subcase:1.3.2} Let, $BD \neq 0$. Then if $\frac{B}{D} \neq a,$ Then by Second Fundamental theorem we get 
\begin{eqnarray}
\nonumber 3T(r,f) & \leq & \overline{N}(r,f)+ \overline{N}(r,0;f)+ \overline{N}(r,a;f)+\overline{N}(r,b;f)+\overline{N}(r,\frac{B}{D};f)+S(r,f).\\
\nonumber & = & \overline{N}(r,f)+ \overline{N}(r,0;f)+ \overline{N}(r,a;f)+\overline{N}(r,b;f)+\overline{N}(r, 0;g)+S(r,f).
\end{eqnarray}
From here, using (\ref{pfeq6}), we can easily obtain $(i)$.  \\
If $\frac{B}{D} = a$, then from (\ref{pfeq5}), we get 
\begin{eqnarray}
\nonumber f- \frac{A}{C} & = & \left(\frac{B}{D}\right)\left\{\frac{1-(\frac{A}{C})(\frac{D}{B})}{1+(\frac{C}{D})g}\right\}\\
\label{pfeq17}\implies (f-b) &=& a \left\{\frac{1-(\frac{b}{a})}{1+\alpha_1 g}\right\},
\end{eqnarray}
where $\alpha_1 = \frac{C}{D}$. \\
If $1$ be an evP of $f$ and $g$, then by Second Fundamental theorem we get 
\begin{eqnarray}
\nonumber 3T(r,f) & \leq & \overline{N}(r,f)+ \overline{N}(r,0;f)+ \overline{N}(r,a;f)+\overline{N}(r,b;f)+\overline{N}(r,1;f)+S(r,f)\\
\nonumber & \leq & \overline{N}(r,f)+ \overline{N}(r,0;f)+ \overline{N}(r,a;f)+\overline{N}(r,b;f)+S(r,f),
\end{eqnarray} 
using (\ref{pfeq6}), we can easily obtain $(i)$.\\
Let $1$ be not an evP of $f$ and $g$, then from (\ref{pfeq17}), we get that $\alpha_1 = \frac{1-a}{b-1}$. Then 
$$ f-b = \frac{(b-a)(b-1)}{(a-1)}\left\{\frac{1}{g+(\frac{b-1}{1-a})}\right\}.$$ 
If $(\frac{b-1}{a-1}) \neq a, b$, then from Second Fundamental theorem we obtain  
\begin{eqnarray}
\nonumber 3T(r,g) & \leq & \overline{N}(r,g)+ \overline{N}(r,0;g)+ \overline{N}(r,a;g)+\overline{N}(r,b;g)+\overline{N}(r,\small{(\frac{b-1}{a-1})};g)+S(r,f)\\
\nonumber & \leq & \overline{N}(r,g)+ \overline{N}(r,0;g)+ \overline{N}(r,a;g)+\overline{N}(r,b;g)+\overline{N}(r,f)+S(r,f),
\end{eqnarray}
using (\ref{pfeq6}), we can easily obtain $(i)$.\\
If $(\frac{b-1}{a-1}) = a$, then $(f-b)(g-a)=a(b-a).$ Also if $(\frac{b-1}{a-1}) = b$, then $(f-b)(g-b)=b(b-a).$\\
\textbf{Case:2} Let $AC=0.$ Since $f$ is non-constant, then it follows that $A$ and $C$ are not simultaneously zero. Thus we consider \\
\textbf{Subcase:2.1} Let $A=0, C \neq 0$. Then from(\ref{pfeq5}), we get that 
$$f=\frac{1}{\alpha g+ \beta},$$
where $\alpha = \frac{C}{B}$ and $\beta = \frac{D}{B}$.\\
\textbf{Subcase:2.1.1} Let, $1$ be an evP of $f$ and $g$, then by second fundamental theorem
\begin{eqnarray}
\nonumber 3T(r,f) & \leq & \overline{N}(r,f)+ \overline{N}(r,0;f)+ \overline{N}(r,a;f)+\overline{N}(r,b;f)+\overline{N}(r,1;f)+S(r,f)\\
\nonumber & \leq & \overline{N}(r,f)+ \overline{N}(r,0;f)+ \overline{N}(r,a;f)+\overline{N}(r,b;f)+S(r,f),
\end{eqnarray}
using (\ref{pfeq6}), we can easily obtain $(i)$.\\
\textbf{Subcase:2.1.2} Let, $1$ be not an evP of $f$ and $g$. Then $\alpha g =\frac{1}{f}-(1-\alpha)$.\\
\textbf{Subcase:2.1.2.1} If $\alpha \neq 1, \alpha \neq 1-\frac{1}{a}, \alpha \neq 1-\frac{1}{b}$, then using second fundamental theorem, we obtain that 
\begin{eqnarray}
\nonumber 3T(r,f) & \leq & \overline{N}(r,f)+ \overline{N}(r,0;f)+ \overline{N}(r,a;f)+\overline{N}(r,b;f)+\overline{N}(r,\frac{1}{1- \alpha};f)+S(r,f)\\
\nonumber & \leq & \overline{N}(r,f)+ \overline{N}(r,0;f)+ \overline{N}(r,a;f)+\overline{N}(r,b;f)+ \overline{N}(r,0;g)+S(r,f),
\end{eqnarray}
using (\ref{pfeq6}), we can easily obtain $(i)$.\\
\textbf{Subcase:2.1.2.2} If $\alpha \neq 1, \alpha \neq 1-\frac{1}{a}, \alpha = 1-\frac{1}{b}$, therefore,$$f=\frac{b}{(b-1)g+1}.$$  
Then if $a \neq \frac{1}{1-b}, b \neq \frac{1}{1-b}$, then using second fundamental theorem we obtain 
\begin{eqnarray}
\nonumber 3T(r,g) & \leq & \overline{N}(r,g)+ \overline{N}(r,0;g)+ \overline{N}(r,a;g)+\overline{N}(r,b;g)+\overline{N}(r,\frac{1}{1- b};g)+S(r,g)\\
\nonumber & \leq & \overline{N}(r,g)+ \overline{N}(r,0;g)+ \overline{N}(r,a;g)+\overline{N}(r,b;g)+ \overline{N}(r,f)+S(r,g),
\end{eqnarray}
using (\ref{pfeq6}), we can easily obtain $(i)$.\\
If $a = \frac{1}{1-b}, b \neq \frac{1}{1-b}$, then $f(a-g)=ab.$\\
Also if $a \neq \frac{1}{1-b}, b = \frac{1}{1-b}$, then $f(b-g)=b^2$.\\
\textbf{Subcase:2.1.2.3} If $\alpha \neq 1, \alpha = 1-\frac{1}{a}, \alpha \neq 1-\frac{1}{b}$, then $$f= \frac{a}{1+(a-1)g}.$$ 
If $a \neq \frac{1}{1-a}, b \neq \frac{1}{1-a}$, then using second fundamental theorem we get 
\begin{eqnarray}
\nonumber 3T(r,g) & \leq & \overline{N}(r,g)+ \overline{N}(r,0;g)+ \overline{N}(r,a;g)+\overline{N}(r,b;g)+\overline{N}(r,\frac{1}{1-a};g)+S(r,g)\\
\nonumber & \leq & \overline{N}(r,g)+ \overline{N}(r,0;g)+ \overline{N}(r,a;g)+\overline{N}(r,b;g)+ \overline{N}(r,f)+S(r,g),
\end{eqnarray}
using (\ref{pfeq6}), we can easily obtain $(i)$.\\
If $a = \frac{1}{1-a}, b \neq \frac{1}{1-a}$, then $f(a-g)=a^{2}$.\\
Also if $a \neq \frac{1}{1-a}, b = \frac{1}{1-a}$, then $f(g-b)=(1-b).$\\
\textbf{Subcase:2.1.2.4} If $\alpha = 1$, then  obviously $ \alpha \neq 1-\frac{1}{a}, \alpha \neq 1-\frac{1}{b}$, so we have $fg \equiv 1$.\\
\textbf{Subcase:2.2} Let $A\neq 0, C= 0$. Then from(\ref{pfeq5}), we get that 
$$f=\gamma g + \delta,$$
where $\gamma = \frac{A}{D}$ and $\delta = \frac{B}{D}$.\\
\textbf{Subcase:2.2.1} Let, $1$ be an evP of $f$ and $g$, then by second fundamental theorem we obtain 
\begin{eqnarray}
\nonumber 3T(r,f) & \leq & \overline{N}(r,f)+ \overline{N}(r,0;f)+ \overline{N}(r,a;f)+\overline{N}(r,b;f)+\overline{N}(r,1;f)+S(r,f)\\
\nonumber & \leq & \overline{N}(r,f)+ \overline{N}(r,0;f)+ \overline{N}(r,a;f)+\overline{N}(r,b;f)+S(r,f),
\end{eqnarray}
using (\ref{pfeq6}), we can easily obtain $(i)$.\\
\textbf{Subcase:2.2.2} Let, $1$ be not an evP of $f$ and $g$. Then $f = \gamma g +(1- \gamma).$ Therefore we consider the following cases:\\
\textbf{Subcase:2.2.2.1} If $\gamma \neq 1, \gamma \neq 1-a, \gamma \neq 1-b$, then using second fundamental theorem we get 
\begin{eqnarray}
\nonumber 3T(r,f) & \leq & \overline{N}(r,f)+ \overline{N}(r,0;f)+ \overline{N}(r,a;f)+\overline{N}(r,b;f)+\overline{N}(r,1-\gamma;f)+S(r,f)\\
\nonumber & \leq & \overline{N}(r,f)+ \overline{N}(r,0;f)+ \overline{N}(r,a;f)+\overline{N}(r,b;f)+\overline{N}(r,0;g)+S(r,f),
\end{eqnarray}
using (\ref{pfeq6}), we can easily obtain $(i)$.\\
\textbf{Subcase:2.2.2.2} If $\gamma \neq 1, \gamma = 1-a, \gamma \neq 1-b$, then $f= (1-a)g+a.$ Therefore,\\
then if $a \neq \frac{a}{a-1}, b \neq \frac{a}{a-1}$, then using second fundamental theorem, 
\begin{eqnarray}
\nonumber 3T(r,g) & \leq & \overline{N}(r,g)+ \overline{N}(r,0;g)+ \overline{N}(r,a;g)+\overline{N}(r,b;g)+\overline{N}(r,\frac{a}{a-1};g)+S(r,g)\\
\nonumber & \leq & \overline{N}(r,g)+ \overline{N}(r,0;g)+ \overline{N}(r,a;g)+\overline{N}(r,b;g)+\overline{N}(r,0;f)+S(r,g),
\end{eqnarray}
using (\ref{pfeq6}), we can easily obtain $(i)$.\\
If $a = \frac{a}{a-1}, b \neq \frac{a}{a-1}$. then $f+g =2$. \\
Also if $a \neq \frac{a}{a-1}, b = \frac{a}{a-1}$, then $\frac{f}{a}+\frac{g}{b}=1.$\\
\textbf{Subcase:2.2.2.3} If $\gamma \neq 1, \gamma \neq 1-a, \gamma = 1-b$, therefore, $f= (1-b)g+b.$.\\ Then if $a \neq \frac{b}{b-1}, b \neq \frac{b}{b-1}$, then using second fundamental theorem, 
\begin{eqnarray}
\nonumber 3T(r,g) & \leq & \overline{N}(r,g)+ \overline{N}(r,0;g)+ \overline{N}(r,a;g)+\overline{N}(r,b;g)+\overline{N}(r,\frac{b}{b-1};g)+S(r,g)\\
\nonumber & \leq & \overline{N}(r,g)+ \overline{N}(r,0;g)+ \overline{N}(r,a;g)+\overline{N}(r,b;g)+\overline{N}(r,0;f)+S(r,g),
\end{eqnarray}
using (\ref{pfeq6}), we can easily obtain $(i)$.\\
If $a = \frac{b}{b-1}, b \neq \frac{b}{b-1}$, then $\frac{f}{b}+\frac{g}{a}=1.$\\
Also if $a \neq \frac{b}{b-1}, b = \frac{b}{b-1}$, then $f+g =2$.\\
\textbf{Subcase:2.2.2.4} If $\gamma = 1,$  then obviously $\gamma \neq 1-a, \gamma \neq 1-b$, so, we obtain $f \equiv g.$\\
\end{proof}
\section{Conflict of Interest Declaration}
The authors have no conflicts of interest to declare. All co-authors have seen and agree with the contents of the manuscript and there is no financial interest to report. We certify that the submission is original work and is not under review at any other publication.
\section{Acknowledgements}
The authors are grateful to the anonymous referees for their valuable suggestions which considerably improved the presentation of the paper.\par
Mr. Sudip Saha is thankful to the Council of Scientific and Industrial Research, HRDG, India for granting Senior Research
Fellowship (File No.: 08/525(0003)/2019-EMR-I) during the tenure of which this work was done.\par 
 Mr. Amit Kumar Pal is thankful to the Higher Education Dept., Govt. of West Bengal, for granting the SVMCM research fellowship(No.: WBP211653035749), during the tenure of which the work was done. \par
The research work of Mr. Soumon Roy is  supported by the Department of Higher Education, Science and Technology \text{\&} Biotechnology, Govt. of West Bengal under the sanction order no. 1303(sanc.)/STBT-11012(26)/17/2021-ST SEC dated 14/03/2022.

\end{document}